\documentclass[12pt]{article}
\usepackage{amssymb,amscd,amsthm,amsmath,color}
\usepackage{fullpage}
\newcommand{\PP}{\mathbb{P}}
\newcommand{\gG}{\mathbb{G}}
\newcommand{\OO}{\mathcal{O}}
\newcommand{\II}{\mathcal{I}}

\newcommand{\FF}{\mathcal{F}}

\newcommand{\aA}{\mathcal{A}}

\newcommand{\UU}{\mathcal{U}}
\newcommand{\CC}{\mathbb{C}}

\newcommand{\Hom}{\textsf{Hom}}

\newcommand{\Sym}{\text{Sym}}

\newtheorem{theorem}{Theorem}[section]
\newtheorem{lemma}[theorem]{Lemma}
\newtheorem{proposition}[theorem]{Proposition}
\newtheorem{corollary}[theorem]{Corollary}

\newtheorem{conjecture}[theorem]{Conjecture}

\theoremstyle{remark}
\newtheorem*{remark}{Remark}

\author{Eric Riedl and David Yang}

\title{Rational curves on general type hypersurfaces}

\begin{document}
\maketitle

\abstract{We develop a technique that allows us to prove results about subvarieties of general type hypersurfaces.  As an application, we use a result of Clemens and Ran to prove that a very general hypersurface of degree $\frac{3n+1}{2} \leq d \leq 2n-3$ contains lines but no other rational curves.}

\section{Introduction}
It is often useful for understanding the geometry of a variety to understand the rational curves on it.  As a basic first question, we may ask: what are the dimensions of these spaces of rational curves? In particular, we would like to understand when there exist rational curves of a given cohomology class.  We work over $\CC$ throughout.

Hypersurfaces are a natural first case for these sorts of questions, and much work has been done investigating the dimensions of spaces of rational curves on hypersurfaces.  Given a hypersurface $X$, let $R_e(X)$ be the space of integral, geometrically rational curves of degree $e$ on $X$.  There is a natural conjecture, made in special cases by several different people, for the dimension of $R_e(X)$ for $X$ very general.  The dimension of $R_e(\PP^n)$ is $(e+1)(n+1)-4$, and it is naively $ed+1$ conditions for a degree $e$ rational curve to lie on a given hypersurface of degree $d$.  Thus, we would expect the dimension of $R_e(X)$ to be $(e+1)(n+1)-4 - (ed+1) = e(n-d+1)+n-4$.

\begin{conjecture}
\label{ReConj}
The dimension of $R_e(X)$ is $\max \{e(n-d+1)+n-4, -1 \}$ if $(n,d) \neq (3,4)$.
\end{conjecture}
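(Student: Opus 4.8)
The plan is to prove the three logically distinct assertions packaged into the formula: the upper bound $\dim R_e(X)\le \max\{e(n-d+1)+n-4,-1\}$ for every $e$, the matching lower bound when the right-hand side is nonnegative, and outright emptiness when it is negative. The organizing object is the incidence correspondence
\[
I_e=\{(f,X): f\in\Mor(\PP^1,\PP^n)\text{ of degree }e,\ f(\PP^1)\subset X\}\subset \Mor(\PP^1,\PP^n)\times\PP^N,
\]
where $\PP^N=\PP(H^0(\OO_{\PP^n}(d)))$ and $N=\binom{n+d}{d}-1$. Over a fixed $f$ the condition $f(\PP^1)\subset X$ is the vanishing of $F\circ f\in H^0(\PP^1,\OO(ed))$, so the fiber of the first projection is a linear space whose codimension is the rank of $F\mapsto F\circ f$. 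When $f$ is birational onto a nondegenerate smooth curve this rank equals $ed+1$, bookkeeping gives $\dim I_e-N=(n+1)(e+1)-ed-2$, and quotienting the $\Mor$-factor by reparametrization returns exactly $e(n-d+1)+n-4$ as the expected fiber dimension of $I_e\to\PP^N$. The entire content is thus to show that this naive count governs the \emph{very general} fiber. I do not expect to settle the full conjecture, since the upper bound for $d<\tfrac{3n+1}{2}$ is genuinely open; the plan is to realize the formula completely only in the advertised range $\tfrac{3n+1}{2}\le d\le 2n-3$, where lines account for the lower bound and the remaining curves can be excluded.

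For $e=1$ the lower bound is classical: the universal Fano scheme of lines is irreducible and smooth of the expected dimension over the space of all hypersurfaces, so whenever $d\le 2n-3$ a very general $X$ carries a family of lines of dimension exactly $2n-d-3=1\cdot(n-d+1)+n-4$. This simultaneously certifies nonemptiness and the exact value in the one case where the formula is nonnegative in this range, and it is the only source of the lower bound there. For the subtler direction I would fix $e\ge 2$ and aim to prove $R_e(X)=\varnothing$, which is what the formula predicts precisely when $d>\tfrac{3n-2}{2}$, i.e.\ throughout $d\ge\tfrac{3n+1}{2}$ (here $e=2$ is the extremal case, since $d>n+1$ makes $e(n-d+1)+n-4$ decreasing in $e$).

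The nonexistence argument is where the technique advertised in the abstract enters. Since $d\ge\tfrac{3n+1}{2}$ forces $d\ge n+2$ for $n\ge 3$, $X$ itself is of general type and the curves cannot cover it; thus if $R_e(X)$ were nonempty for a very general $X$, the degree-$e$ rational curves would, as $X$ varies over $\PP^N$, sweep out a uniruled subvariety of the universal hypersurface, and hence a \emph{proper} uniruled $W\subsetneq X$ in the very general member. The result of Clemens and Ran forces subvarieties of $X$ of the relevant dimension to be of general type once $d$ is suitably large, and a general-type variety is not uniruled -- a contradiction. The obstacle is that the Clemens--Ran general-type threshold sits above the target range $d\ge\tfrac{3n+1}{2}$, so I would use the paper's reduction to transfer a general-type statement known in a base range of $(n,d)$ down to the hypersurfaces at hand rather than invoking Clemens--Ran directly on $X$. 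Running this requires a careful stratification of $\Mor(\PP^1,\PP^n)$ by the geometry of the image: singular (high arithmetic genus) rational curves impose as few as $ed+1-p_a$ conditions, and degenerate curves spanning a small linear subspace shrink both the domain and the number of conditions, so each such stratum must be shown not to dominate $\PP^N$. I expect this stratum-by-stratum dimension estimate, married to the reduction that imports the Clemens--Ran input at the correct dimension, to be the main difficulty; the generic smooth, nondegenerate stratum is already handled by the naive count above.

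Finally I would note that the hypothesis $(n,d)\neq(3,4)$ is genuine and must simply be excluded: a very general quartic surface in $\PP^3$ is a K3 surface, which contains rational curves even though $e(3-4+1)+3-4=-1$, so the uniruledness obstruction fails there (the relevant subvarieties are not of general type) and no argument should be attempted.
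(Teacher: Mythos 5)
This statement is Conjecture \ref{ReConj}; the paper offers no proof of it, and you are right to say at the outset that you cannot settle it --- the full statement is open, and what the paper actually establishes is only the special case recorded as Theorem \ref{mainCurvesThm} (for $\frac{3n+1}{2}\le d\le 2n-3$ a very general $X$ contains lines and no other rational curves). So the honest scope restriction in your first paragraph is correct, and your dimension count for the incidence correspondence and your treatment of $e=1$ via the Fano scheme of lines both match standard practice and the formula's derivation in the introduction.

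Where your plan for the provable range goes wrong is in the nonexistence step for $e\ge 2$. First, you mischaracterize the Clemens--Ran input: it is not a ``general type threshold'' that sits above $d\ge\frac{3n+1}{2}$ and must be transported down. As stated in the paper (Corollary \ref{ClemensRan}), it is a dichotomy --- either $h^0(\omega_Y(t))\ne 0$ or $Y_0$ lies in the union of the lines on $X$ --- and its hypotheses are verified \emph{directly} in the range $d\ge\frac{3n+1}{2}$, $n\ge 3$. Second, and more seriously, your uniruledness-versus-general-type contradiction cannot close the argument, because Clemens--Ran leaves open exactly the case it cannot exclude: a rational curve contained in the subvariety $Y\subset X$ swept out by lines. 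That locus is itself covered by rational curves (the lines), so ``a general type subvariety is not uniruled'' gives no contradiction there. This residual case is precisely where the paper's new content lives: Theorem \ref{noCurvesFano} (the Fano scheme $F_1(X)$ contains no rational curves, proved via the Grassmannian codimension Proposition \ref{GrassProp}) together with Theorem \ref{noConics} (no reducible conics, so the universal line $U\to Y$ is bijective on closed points), which force any non-line rational curve in $Y$ to descend to a rational curve in $F_1(X)$, a contradiction. Without an argument handling curves inside the union of lines, your sketch does not prove even the special case.
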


In the Fano range, there is the work of Harris-Roth-Starr and Beheshti-Kumar \cite{HRS, beheshtiKumar}, as well as our previous paper \cite{RiedlYang}.  In the Calabi-Yau range, there is the work on the Clemens Conjecture \cite{Katz1986, Johnsen1995, Johnsen1997, Cotterill04rationalcurves, Cotterill07rationalcurves} and the work of many different people on K3 surfaces, see \cite{LiLiedtke} for more details.  It is known that this conjecture is false for K3 surfaces, as they all contain rational curves.

Here, we will consider the general type range, where $d > n+1.$ Claire Voisin has conjectured \cite{voisin2003} the following, which follows from Conjecture \ref{ReConj}:

\begin{conjecture}
(Voisin) On a very general hypersurface of general type, the degrees of rational curves on the hypersurface are bounded.
\end{conjecture}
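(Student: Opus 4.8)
The plan is to deduce Voisin's conjecture directly from the dimension formula in Conjecture \ref{ReConj} by a boundedness-via-dimension-count argument, so that the statement becomes an elementary corollary. First I would work in the general type range $d > n+1$, where $n-d+1 \leq -1$, so that the affine-linear function $e \mapsto e(n-d+1)+n-4$ is strictly decreasing in $e$. Solving $e(n-d+1)+n-4 < -1$ gives the explicit threshold $e_0 = \frac{n-3}{d-n-1}$, and for every integer $e > e_0$ the quantity $e(n-d+1)+n-4$ is strictly less than $-1$, so the maximum appearing in Conjecture \ref{ReConj} equals $-1$.

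Next I would invoke the standard convention that $\dim R_e(X) = -1$ means $R_e(X) = \varnothing$, i.e.\ that $X$ carries no integral geometrically rational curve of degree $e$. Assuming Conjecture \ref{ReConj}, this emptiness holds for every $e > e_0$; note that the excluded case $(n,d)=(3,4)$ does not occur here, since it violates $d>n+1$. Because every geometrically rational curve on $X$ has a well-defined degree and all of the spaces $R_e(X)$ with $e > e_0$ are empty, the degrees of rational curves on $X$ are bounded by $\lfloor e_0 \rfloor$, which is precisely Voisin's conjecture.

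The one point requiring care is the meaning of \emph{very general}. Conjecture \ref{ReConj} asserts the value of $\dim R_e(X)$ for $X$ outside a countable union of proper closed subsets of the parameter space of hypersurfaces, with the exceptional locus depending a priori on $e$. Since there are only countably many degrees $e$, the intersection over all $e > e_0$ of these very general loci is again the complement of a countable union of proper closed subsets, hence still very general; thus a single very general $X$ simultaneously satisfies $R_e(X)=\varnothing$ for all $e > e_0$, and the uniform bound $\lfloor e_0\rfloor$ is legitimate.

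I do not anticipate a genuine obstacle in this implication: it is a formal consequence of the negativity of $n-d+1$ in the general type range together with the dimension formula. The real difficulty lies entirely in Conjecture \ref{ReConj} itself, and the substantive content of the paper is presumably to establish enough of that conjecture—via the cited result of Clemens and Ran—to obtain the boundedness (indeed the stronger conclusion that only lines occur) throughout the stated range $\frac{3n+1}{2} \leq d \leq 2n-3$.
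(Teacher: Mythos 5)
The statement you are asked about is labelled a \emph{conjecture} in the paper, and the paper contains no proof of it: the authors only remark that it follows from Conjecture \ref{ReConj}, and their unconditional contribution (Theorem \ref{mainCurvesThm}) establishes the conclusion only in the range $\frac{3n+1}{2} \leq d \leq 2n-3$ --- and by a completely different route (the Clemens--Ran theorem, the Grassmannian codimension argument for $F_1(X)$, and the exclusion of reducible conics), not by proving any part of the dimension formula. Your verification of the implication is correct and is exactly the unstated reasoning behind the paper's one-line remark: in the general type range $n-d+1 \leq -1$, so $e(n-d+1)+n-4 < -1$ once $e > \frac{n-3}{d-n-1}$; the convention that $\dim R_e(X) = -1$ means $R_e(X) = \varnothing$ then gives emptiness; the excluded pair $(n,d)=(3,4)$ is not of general type; and intersecting the countably many very general loci (one for each $e$) is again very general, so a single $X$ works for all $e$ simultaneously.

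The gap is that this is a conditional argument resting on an open conjecture, not a proof of the statement. Conjecture \ref{ReConj} is not established in the paper (nor anywhere else) in the generality your argument requires --- indeed the whole point of the paper is to make partial progress toward it in a restricted degree range. What you have written is a correct proof that Conjecture \ref{ReConj} implies Voisin's conjecture, and you should present it as exactly that; as a proof of Voisin's conjecture itself it does not close the statement, and no elaboration of this dimension count will, since the substantive input (bounding $\dim R_e(X)$ for all $e$) is precisely what is missing. If you want an unconditional result, the paper's actual strategy --- reduce to curves in the locus swept out by lines via Clemens--Ran, then rule out rational curves in $F_1(X)$ via Proposition \ref{GrassProp} --- is the model to follow, and it yields boundedness (in fact, only lines) only for $\frac{3n+1}{2} \leq d \leq 2n-3$.
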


There has been quite a bit of previous work towards this conjecture.  Clemens \cite{clemensCurveHyp} proved bounds on the degree and genus of curves lying in very general hypersurfaces.  Generalizing Clemens' results to higher-dimensional subvarieties, Ein \cite{Ein1988} proved that if $d \geq 2n-k$ and $1 \leq k \leq n-3$ then a $k$-dimensional subvariety of a very general hypersurface must have a resolutions of singularities with effective canonical bundle.  If the inequality is strict, he proves that the canonical bundle must in fact be big.  Voisin \cite{voisin1996, voisin1998} improves his bound by $1$, i.e., she proves the same result for $d \geq 2n-1-k$, with $1 \leq k \leq n-3$ as before.  In the special case of curves, this proves that a very general hypersurface of degree $d \geq 2n-2$ in $\PP^n$, $n\geq 4$, contains no rational curves.

Pacienza \cite{Pacienza2d-3} proved that for $n \geq 5$ and $d \geq 2n-3$, the dimension of the space of rational curves on a very general hypersurface in $\PP^n$ is the expected dimension.  Clemens and Ran build on Voisin's and Pacienza's work and prove that in a hypersurface of high enough degree, all subvarieties without effective canonical bundle are contained in the locus swept out by lines (see Theorem \ref{ClemensRan} for a precise statement).  Results along these lines are particularly interesting in light of the Lang Conjecture, which predicts that these hypersurfaces will not have dense spaces of rational curves.  To our knowledge, this work was the best result on the spaces of rational curves on very general general type hypersurfaces.  For instance, Voisin \cite{voisinOpenConjecture} reposes the question as recently as last year.

We improve on Pacienza's result.

\begin{theorem}
\label{mainCurvesThm}
Let $X \subset \PP^n$ be a general hypersurface of degree $d$, with $\frac{3n+1}{2} \leq d \leq 2n-3 $.  Then $X$ contains lines but no other rational curves.
\end{theorem}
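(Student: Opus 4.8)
The plan is to combine the existing result of Clemens and Ran with a new "positivity"/general-type input that I expect is the main contribution of the earlier sections of the paper. The Clemens–Ran theorem (Theorem \ref{ClemensRan}, which I am allowed to assume) says that on a sufficiently high-degree hypersurface every subvariety without effective canonical bundle lies in the locus swept out by lines. A rational curve certainly has no effective canonical bundle on its normalization $\PP^1$ (indeed $\omega_{\PP^1}$ has degree $-2$), so any rational curve of degree $\geq 2$ must be forced into the "lines-swept" locus. The strategy is therefore to show that, in the degree range $\frac{3n+1}{2} \leq d \leq 2n-3$, no rational curve of degree $e \geq 2$ can actually live inside the subvariety swept out by lines — i.e. that the lines themselves are isolated/rigid enough that they cannot be chained or deformed into higher-degree rational curves.

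Let me think about how to extract that. Let $F \subset X$ be the union of lines on $X$. A very general hypersurface of degree $d$ in this range contains lines (the Fano scheme of lines has the expected dimension $2n-3-d \geq 0$ precisely when $d \leq 2n-3$, which is the upper bound in the statement), but the lower bound $d \geq \frac{3n+1}{2}$ should force $F$ to be very small — I would compute that the dimension of the total space of lines, and hence of $F$, is small enough that $F$ is a union of low-dimensional pieces whose geometry I can control. First I would establish, using the technique developed earlier in the paper, the precise dimension and structure of the space of lines and of $F$; the key quantitative point is that $\dim F$ is bounded by something like $2(2n-3-d) + 1$ or smaller, which in the stated range is strictly less than what would be needed to support a positive-dimensional family of rational curves of higher degree. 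A degree-$e$ rational curve contained in $F$ would have to map $\PP^1$ into this swept locus, and I would argue that the only such maps have degree $1$, i.e. are parametrizations of the lines themselves.

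The cleanest way to finish is a deformation/incidence argument: I would set up the universal family $\Mor$ (or the appropriate Kontsevich/Hilbert space) of degree-$e$ rational curves over the universal hypersurface, project to the space of hypersurfaces, and perform a dimension count. Using the conclusion that any such curve must lie in $F$, I would bound the dimension of the relevant incidence variety and compare it to $\dim$ of the space of hypersurfaces; showing the projection is not dominant for $e \geq 2$ in this degree range gives that the very general $X$ carries no such curve. The bound $d \geq \frac{3n+1}{2}$ is exactly what I would expect to make this dimension count go through: it is the threshold at which the expected dimension $e(n-d+1)+n-4$ of $R_e(X)$ becomes negative for all $e \geq 2$ while remaining nonnegative for $e = 1$, consistent with Conjecture \ref{ReConj}.

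**The hard part will be** the second step — ruling out higher-degree rational curves inside the lines-swept locus $F$. It is not enough that $F$ has small dimension; rational curves can arise by degenerating to chains of lines or by mapping $\PP^1$ nonconstantly into a positive-dimensional family of lines, so I must genuinely control the geometry of $F$, not merely its dimension. I expect this is precisely where the paper's new technique (the "technique that allows us to prove results about subvarieties of general type hypersurfaces" advertised in the abstract) does the real work: it should upgrade Clemens–Ran from a statement about the existence of an effective canonical bundle to a rigidity statement strong enough to pin down the rational curves exactly. Verifying that the sharp numerical constant $\frac{3n+1}{2}$ is the correct threshold, rather than something weaker, will require a careful and possibly delicate bound in this step.
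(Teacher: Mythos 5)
Your first step is exactly the paper's: Clemens--Ran forces every rational curve into the locus $Y$ swept out by lines once $d \geq \frac{3n+1}{2}$. But the second step --- ruling out rational curves of degree $\geq 2$ inside $Y$ --- is where your proposal has a genuine gap, and you essentially admit this by deferring to ``the paper's new technique'' without supplying an argument. A dimension count on $Y$ cannot work on its own: a variety of very small dimension (even a surface) can contain rational curves of every degree, and chains of lines or one-parameter families of lines inside $Y$ would produce exactly such curves without increasing $\dim Y$. Bounding $\dim Y$ and then projecting an incidence variety to the space of hypersurfaces gives you no control, because you have no a priori bound on the dimension of the space of rational curves that a fixed swept locus $Y$ can carry.

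The paper's actual mechanism is structural. Let $U \to F_1(X)$ be the universal line and $U \to X$ the evaluation map. Two auxiliary results do the work: (i) for $d \geq \frac{3n}{2}-1$ a general $X$ contains no reducible conics, so $U \to X$ is bijective onto $Y$ on closed points and every map $\PP^1 \to Y$ lifts to $U$; (ii) for $n \leq \frac{(d+1)(d+2)}{6}$, which holds throughout your range, the Fano scheme $F_1(X)$ is smooth, of general type, and --- this is the genuinely new content, proved via a codimension-bootstrapping proposition about Grassmannians applied to linear sections of a hypersurface in much higher dimension --- contains no rational curves whatsoever. A nonline rational curve in $Y$ lifts to $U$ and is not contracted by $U \to F_1(X)$, hence yields a rational curve in $F_1(X)$, a contradiction. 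So the missing idea is to convert ``rational curve in the swept locus'' into ``rational curve in the Fano scheme of lines'' via the universal family, and then kill the latter; no dimension count on $Y$ itself appears. Your heuristic that $\frac{3n+1}{2}$ is the threshold where the expected dimension of $R_e$ goes negative for all $e \geq 2$ is also not the source of the bound: that threshold is $\frac{3n}{2}-1$, where the paper notes $X$ \emph{does} still contain conics, while $\frac{3n+1}{2}$ is what makes the twist $t$ in the Clemens--Ran theorem vanish for curves.
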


We prove this via a downward induction.  The idea is to use a simple proposition about Grassmannians to transport results about not being swept out by certain subvarieties in high dimensional projective space to results about containing no such subvarieties in lower dimensional projective spaces.  The technique also allows us to recover a slightly weaker version of Ein's and Voisin's results on subvarieties of very general hypersurfaces, as well as to prove that given a family of varieties and a dimension $n$, there is a degree $d_0$ such that a very general degree $d \geq d_0$ hypersurface in $\PP^n$ contains no subvarieties from that family.

\section{Acknowledgements}
We would like to acknowledge many helpful conversations with Joe Harris, Matthew Woolf, Jason Starr, Roya Beheshti, and Lawrence Ein.  The first author was partially supported by NSF grant DGE1144152.

\section{Proofs}

The proof of Theorem \ref{mainCurvesThm} has three main ingredients.  The first is a result of Clemens and Ran.

\begin{theorem} (\cite{clemensAndRan})
Let $X$ be a very general hypersurface of degree $d$ in $\PP^n$ and let $k$ be an integer.  Let $d \geq n$, $\frac{d(d+1)}{2} \geq 3n - 1 - k$ and let $f: Y \to Y_0$ be a desingularization of an irreducible subvariety of dimension $k$.  Let $t = \max(0,-d+n+1+\lfloor \frac{n-k}{2} \rfloor)$.  Then either $h^0(\omega_Y(t)) \neq 0$ or $Y_0$ is contained in the subvariety of $X$ swept out by lines.
\end{theorem}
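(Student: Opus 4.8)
The plan is to run the Clemens--Ein--Voisin deformation method for very general hypersurfaces and to push it so that it tracks the lines-swept locus. First I would reduce the statement to an assertion about families. Let $B = \PP(H^0(\PP^n, \OO(d)))$ be the parameter space of degree-$d$ hypersurfaces and $\mathcal{X} \subset \PP^n \times B$ the universal hypersurface, cut out by the tautological section of $\OO_{\PP^n}(d) \boxtimes \OO_B(1)$. Because $X$ is \emph{very general}, any subvariety $Y_0 \subset X$ spreads out: taking the component of the relative Hilbert scheme through $[Y_0]$ and using that there are only countably many components, one gets an irreducible family $\mathcal{Y} \to B$, dominating $B$, whose fiber over $[X]$ is $Y_0$. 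After replacing $\mathcal{Y}$ by a desingularization, it suffices to produce a nonzero section of $\omega_Y \otimes f^*\OO(t)$ whenever $Y_0$ is not contained in the locus swept out by lines.

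The core is to manufacture such sections from the positivity of the universal hypersurface. Since $\mathcal{X} \to \PP^n$ is a projective bundle, $\mathcal{X}$ is very positive in the $B$-directions, and I would differentiate the incidence relation $\mathcal{Y} \subset \mathcal{X}$ along $B$. A first-order deformation $\dot F \in T_{[X]}B = H^0(X, \OO_X(d))$ keeps $Y_0$ inside $X$ only if $\dot F|_{Y_0}$ lies in the image of the map $H^0(N_{Y_0/\PP^n}) \to H^0(\OO_X(d)|_{Y_0})$ coming from deforming $Y_0$, and dominance of $\mathcal{Y} \to B$ forces this map to be essentially surjective. Feeding the resulting evaluation data into Clemens' jet/Wronskian construction --- taking successive derivatives of $F$ along $Y$ and wedging --- produces a map into $\omega_Y \otimes f^*\OO(t)$. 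The numerology should fall out of the bookkeeping: adjunction gives $\omega_X = \OO_X(d-n-1)$, contributing the $-d+n+1$ in $t$; the term $\lfloor \frac{n-k}{2}\rfloor$ arises from inductively cutting $Y_0$ by generic hyperplanes (pairing up the $n-k$ normal directions), each step twisting by $f^*\OO(1)$; and the hypothesis $\frac{d(d+1)}{2} \geq 3n-1-k$ is exactly the condition that enough independent second-order jets of degree-$d$ forms are available along $Y$ to run the construction.

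The step I expect to be the genuine obstacle is controlling the degeneracy locus of this construction. The section produced is nonzero away from the locus where the jet map drops rank, i.e. where the second fundamental form (equivalently the Gauss map of $Y_0 \subset \PP^n$) degenerates; the whole point of Clemens--Ran is that this degeneracy forces $Y_0$ to be ruled by lines, so that either the section survives and $h^0(\omega_Y(t)) \neq 0$, or $Y_0$ lies in the subvariety of $X$ swept out by lines. Making this dichotomy sharp --- proving that the only way the construction can fail everywhere is the presence of lines, and that otherwise it degenerates only on a proper sublocus --- is where the careful geometry of the jets and the inductive hyperplane-section argument must be combined. The remaining ingredients (desingularization, semicontinuity to descend from the family to the very general $X$, and checking that the twist $t$ and the bound remain compatible at each inductive step) are routine once this heart is in place.
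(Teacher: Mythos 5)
This theorem is not proved in the paper at all: it is quoted verbatim from Clemens and Ran \cite{clemensAndRan} as an external input, so there is no internal proof to compare your attempt against. Evaluated on its own terms, your proposal correctly identifies the circle of ideas behind the result --- spreading $Y_0$ out over the parameter space of hypersurfaces using the ``very general'' hypothesis and the countability of Hilbert scheme components, exploiting the positivity of the universal hypersurface $\mathcal{X}$ in the $B$-directions, and producing sections of $\omega_Y(t)$ by a jet/Wronskian construction in the style of Clemens, Ein and Voisin. That is the right family of techniques, and the first reduction (to a dominating family $\mathcal{Y} \to B$) is genuinely routine.

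However, what you have written is a plan rather than a proof, and the two places you defer are precisely the mathematical content of the theorem. First, the inequality $\frac{d(d+1)}{2} \geq 3n-1-k$ and the twist $t = \max(0, -d+n+1+\lfloor\frac{n-k}{2}\rfloor)$ do not ``fall out of the bookkeeping'' of a generic jet construction: the $\lfloor\frac{n-k}{2}\rfloor$ is Voisin's refinement of Ein's bound and comes from exploiting the quadratic (second-order) structure of the deformation data, not from ``inductively cutting $Y_0$ by generic hyperplanes,'' which would change $n$ and $k$ in tandem and produce no halving; as stated, that step of your numerology is wrong, not merely unverified. Second, and more seriously, the entire point of the Clemens--Ran theorem relative to its predecessors is the dichotomy: when the section-producing construction degenerates everywhere, one must show that $Y_0$ is forced into the locus swept out by lines. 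You explicitly flag this as ``the genuine obstacle'' and do not supply an argument for it. Without that step the conclusion is only ``either $h^0(\omega_Y(t)) \neq 0$ or the construction degenerates,'' which is vacuous. So the proposal, while pointed in the right direction, has its essential steps missing or incorrect.
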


If we take the special case where $Y$ is a rational curve, then we can see that for $t=0$, $h^0(\omega_Y(t)) = 0$.  Thus, $Y_0$ must be contained in the subvariety swept out by lines, provided $d \geq \frac{3n+1}{2}$.

\begin{corollary}
\label{ClemensRan}
Let $X \subset \PP^n$ be a very general degree $d$ hypersurface.  Then if $\frac{d(d+1)}{2} \geq 3n - 2$ and $ d \geq \frac{3n+1}{2}$, then any rational curve in $X$ is contained in the union of the lines on $X$.  In fact, for $n \geq 3$ and $d \geq \frac{3n+1}{2}$ then any rational curve in $X$ is contained in the union of the lines of $X$.
\end{corollary}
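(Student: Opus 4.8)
The plan is to apply the Clemens--Ran theorem directly in the case $k=1$. Let $Y_0 \subset X$ be an integral rational curve and let $f : Y \to Y_0$ be its desingularization; since $Y_0$ is integral and geometrically rational, its desingularization is just the normalization $Y \cong \PP^1$. I would then invoke the theorem with $k = 1$, for which the degree hypothesis $\frac{d(d+1)}{2} \geq 3n - 1 - k$ becomes exactly $\frac{d(d+1)}{2} \geq 3n - 2$, which is one of the two standing assumptions. The remaining hypothesis $d \geq n$ is immediate from $d \geq \frac{3n+1}{2} \geq n$.

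The key computation is to check that $d \geq \frac{3n+1}{2}$ forces the integer $t = \max\bigl(0,\, -d + n + 1 + \lfloor \frac{n-k}{2}\rfloor\bigr)$ to vanish. With $k = 1$ this amounts to verifying $d \geq n + 1 + \lfloor \frac{n-1}{2}\rfloor$, and I would split into parities: when $n$ is odd the right-hand side equals $\frac{3n+1}{2}$ exactly, while when $n$ is even it equals $\frac{3n}{2} < \frac{3n+1}{2}$, so in both cases the hypothesis $d \geq \frac{3n+1}{2}$ suffices to give $t = 0$. Once $t = 0$, the relevant space of sections is $h^0(\omega_Y) = h^0(\OO_{\PP^1}(-2)) = 0$, so the first alternative in the theorem is excluded and $Y_0$ must lie in the subvariety of $X$ swept out by lines. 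This establishes the first assertion.

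For the \emph{in fact} sharpening, the task is to show that the degree condition $\frac{d(d+1)}{2} \geq 3n - 2$ is already implied by $d \geq \frac{3n+1}{2}$ once $n \geq 3$. I would substitute the smallest admissible value $d = \frac{3n+1}{2}$ into $\frac{d(d+1)}{2}$, obtaining $\frac{(3n+1)(3n+3)}{8}$, and reduce the inequality to a quadratic of the form $9n^2 - 12n + 19 \geq 0$; its discriminant is negative, so it holds for every $n$, and in particular for $n \geq 3$. Since $\frac{d(d+1)}{2}$ is increasing in $d$, the inequality persists for all larger $d$, so the degree hypothesis is redundant.

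I do not expect a serious obstacle: the corollary is a specialization of the Clemens--Ran theorem combined with elementary numerics. The only places requiring care will be the floor function, where the even and odd cases behave slightly differently (the bound $\frac{3n+1}{2}$ being sharp only for odd $n$), and the verification that the quadratic in $n$ has no real roots, which is what makes the second hypothesis automatic.
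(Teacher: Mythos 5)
Your proposal is correct and follows essentially the same route as the paper: specialize the Clemens--Ran theorem to $k=1$, observe that $d \geq \frac{3n+1}{2}$ forces $t=0$ so that $h^0(\omega_Y) = h^0(\OO_{\PP^1}(-2)) = 0$ rules out the first alternative, and then check numerically that $d \geq \frac{3n+1}{2}$ with $n \geq 3$ already implies $\frac{d(d+1)}{2} \geq 3n-2$. Your verification is in fact slightly more careful than the paper's (the parity analysis of $\lfloor \frac{n-1}{2}\rfloor$ is only asserted there, and the paper bounds $\frac{(3n+1)(3n+3)}{8} \geq n^2 \geq 3n-2$ rather than computing a discriminant), but these are cosmetic differences.
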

\begin{proof} Note that if $n \geq 3$, then $d \geq \frac{3n+1}{2}$ and 
\[ \frac{d(d+1)}{2} \geq \frac{(3n+1)(3n+3)}{8} = \frac{1}{8} (9n^2+12n+3) \geq n^2 \geq 3n-2 \] 
since $n \geq 3$.
\end{proof}

The second ingredient is a new result about the Fano scheme of lines on $X$.

\begin{theorem}
\label{noCurvesFano}
If $X \subset \PP^n$ is very general and $n \leq \frac{(d+2)(d+1)}{6}$, then $F_1(X)$ contains no rational curves.
\end{theorem}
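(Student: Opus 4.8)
The plan is to recast the statement as a dimension count on an incidence variety and to run it stratum by stratum according to how the moving lines sit in $\PP^n$. A rational curve in $F_1(X)$ is the same data as a nonconstant map $\PP^1 \to F_1(X) \subset \gG(1,n)$, i.e. a $\PP^1$-family of distinct lines lying on $X$; the union of these lines is a surface $T \subset X$ ruled by lines over a rational base (a rational scroll, a cone over a rational curve, or a plane), and conversely any such surface in $X$ produces a rational curve in $F_1(X)$. So it suffices to show that a very general $X$ contains no rational surface ruled by lines. Since ``very general'' permits intersecting countably many conditions, I would fix the discrete invariants of $T$ — its linear span $\PP^m$ with $2 \le m \le n$ and its degree $\delta$ — and rule out each type separately.

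For fixed $(m,\delta)$ I would form the incidence variety $\Sigma = \{(T,X) : T \subset X\}$, where $T$ ranges over the family $\MM_{m,\delta}$ of line-ruled rational surfaces of degree $\delta$ spanning a $\PP^m \subset \PP^n$ and $X$ ranges over $P := \PP(H^0(\PP^n,\OO(d)))$. The fibre of $\Sigma \to \MM_{m,\delta}$ over $T$ is $\PP(\ker r_T)$, where $r_T : H^0(\PP^n,\OO(d)) \to H^0(T,\OO_T(d))$ is restriction, so $\dim\Sigma \le \dim\MM_{m,\delta} + \dim P - \min_T h_T(d)$ with $h_T(d) := \operatorname{rank} r_T$ the number of conditions imposed by containing $T$. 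The projection $\Sigma \to P$ then fails to dominate — whence a very general $X$ contains no $T$ of this type — as soon as
\[ h_T(d) > \dim \MM_{m,\delta}. \]
Because $T$ spans a linear $\PP^m$ and $H^0(\PP^n,\OO(d)) \twoheadrightarrow H^0(\PP^m,\OO(d))$, the quantity $h_T(d)$ equals the Hilbert function of $T \subset \PP^m$, so the whole problem reduces to bounding this Hilbert function below and comparing it with the moduli of such surfaces.

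The main obstacle is establishing $h_T(d) > \dim\MM_{m,\delta}$ uniformly in all the invariants — the span $m$, the degree $\delta$ of the swept surface, and the degree $e$ of the family in $\gG(1,n)$, which may exceed $\delta$ when the ruling map is not birational — and the subtlety is that the inequality is genuinely tight. Writing $\dim\MM_{m,\delta} = (m+1)(n-m) + (\text{moduli of such } T \text{ in a fixed } \PP^m)$, the binding stratum is $m=2$: there the swept surface is forced to be the plane itself, $h_T(d) = \binom{d+2}{2}$ and $\dim\MM_{2,1} = 3(n-2)$, so the inequality reads $\binom{d+2}{2} > 3(n-2)$, i.e. $F_2(X) = \varnothing$, which is exactly what $n \le \frac{(d+1)(d+2)}{6}$ guarantees (with a little room to spare). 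For $m \ge 3$ the Hilbert function grows like $\tfrac{\delta}{2}d^2$ while the moduli grows only linearly in the degree of the directrix, so those strata impose strictly more conditions relative to their moduli than the plane case; I would verify this by bounding $h_T(d)$ below by the Hilbert function $(d+1) + (m-1)\tfrac{d(d+1)}{2}$ of a minimal-degree scroll and checking the resulting inequalities in $m$. The delicate points are the possibly singular or non-normal swept surfaces — cones with a vertex, scrolls with unbalanced splitting type, or families whose ruling map is multiple-to-one onto its image — for which one must control $\operatorname{rank} r_T$ from below without assuming $T$ smooth; isolating $m=2$ as the extremal stratum and dominating all the others is precisely where the constant $\tfrac16$ is pinned down.
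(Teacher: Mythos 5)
Your reduction of the statement to ruling out line-ruled rational surfaces $T\subset X$, stratified by span $m$ and degree $\delta$, is sound, and the incidence-variety framework is a legitimate (and genuinely different) strategy from the paper's. But the entire content of the proof is the uniform inequality $h_T(d)>\dim\MM_{m,\delta}$, and you only verify it for the plane stratum; for everything else you substitute a heuristic that does not hold up as stated. The specific failure point: you propose to bound $h_T(d)$ below by the Hilbert function $(d+1)+(m-1)\tfrac{d(d+1)}{2}$ of a minimal-degree surface, which is \emph{independent of $\delta$}, while $\dim\MM_{m,\delta}$ is \emph{not} merely ``linear in the degree of the directrix'' with a harmless slope --- a degree-$\delta$ line-ruled rational surface in a fixed $\PP^m$ corresponds to a rational curve of degree about $\delta$ in $\gG(1,m)$, so the stratum has dimension growing like $(m+1)\delta+(m+1)(n-m)$, with $\delta$ ranging up to $d$ and $m$ up to $d+1$. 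Comparing your $\delta$-free lower bound against $(m+1)(n-m)+(m+1)d$ using only $\tfrac{d(d+1)}{2}\geq 3n-d-1$ (which is what the hypothesis gives when it is tight) leaves the inequality unverified or outright false in the large-$\delta$, large-$m$ strata; to rescue it you would need Castelnuovo-type lower bounds on $h_T(d)$ that actually grow with $\delta$, applied to possibly singular, non-linearly-normal, reduced-but-not-normal surfaces. None of that is carried out, and it is not clear the bound that emerges would be $n\leq\frac{(d+1)(d+2)}{6}$ rather than something weaker. (Also, the plane stratum alone only requires $n<\frac{(d+1)(d+2)}{6}+2$, so it does not by itself ``pin down'' the stated constant.)

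For contrast, the paper avoids all Hilbert-function estimates. It first observes that $F_1(X)$ is smooth with ample canonical bundle once $n\leq\frac{d(d+1)}{2}-2=:m$, so for $X$ very general in $\PP^m$ no rational curve in $F_1(X)$ passes through a very general line; this gives codimension $\geq 1$ for the bad locus in the space of pairs $(\ell,X)$ in ambient dimension $m$. It then applies Proposition \ref{GrassProp} repeatedly to linear sections of a fixed high-dimensional pair $(\ell_0',Y)$: each drop in ambient dimension forces the codimension of the bad locus of sections to rise by at least one, yielding codimension $\geq m-n+1$ in ambient dimension $n$, which beats the fiber dimension $2n-d-3$ of the universal line exactly when $n\leq\frac{(d+1)(d+2)}{6}$. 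That mechanism is what produces the constant $\tfrac16$, and it sidesteps the stratum-by-stratum analysis your approach would require.
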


Note that if $d \geq \frac{3n+1}{2}$, then $\frac{(d+2)(d+1)}{6} \geq \frac{9n^2+24n+15}{24} \geq n$. Our final ingredient is a known result about reducible conics.

\begin{theorem}
\label{noConics}
If $X \subset \PP^n$ is general and $d \geq \frac{3n}{2}-1$, then $X$ contains no reducible conics.
\end{theorem}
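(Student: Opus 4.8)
The plan is a textbook incidence-correspondence dimension count, and the hypothesis on $d$ will turn out to be exactly the threshold that makes it work. Let $\mathcal{C}$ be the quasi-projective variety parametrizing reducible conics, that is, unordered pairs of distinct lines $L_1,L_2\subset\PP^n$ that meet in a point. A reducible conic is determined by its node $p\in\PP^n$ together with two distinct lines through $p$, and the lines through a fixed point form a $\PP^{n-1}$; hence
\[
\dim\mathcal{C}=n+2(n-1)=3n-2.
\]
Writing $\PP^N=\PP H^0(\OO_{\PP^n}(d))$ for the space of degree $d$ hypersurfaces, I would form the incidence variety
\[
\Phi=\{(C,X)\in\mathcal{C}\times\PP^N : C\subset X\},
\]
together with its projections $\pr_1\colon\Phi\to\mathcal{C}$ and $\pr_2\colon\Phi\to\PP^N$, and bound $\dim\Phi$ by controlling the fibers of $\pr_1$.

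The key local input is that each fiber $\pr_1^{-1}(C)$ is the linear system of hypersurfaces containing $C=L_1\cup L_2$, whose codimension in $\PP^N$ is the rank of the restriction map $H^0(\OO_{\PP^n}(d))\to H^0(\OO_C(d))$. I would show this map is surjective, so that the codimension equals $h^0(\OO_C(d))$. Since two distinct lines meeting at a point span a plane $\PP^2\subset\PP^n$, the restriction factors through the surjection $H^0(\OO_{\PP^n}(d))\to H^0(\OO_{\PP^2}(d))$; and as $C$ is cut out in that $\PP^2$ by a single conic $Q$, the sequence
\[
0\to\OO_{\PP^2}(d-2)\xrightarrow{\;Q\;}\OO_{\PP^2}(d)\to\OO_C(d)\to 0
\]
together with $H^1(\OO_{\PP^2}(d-2))=0$ gives both surjectivity of $H^0(\OO_{\PP^2}(d))\to H^0(\OO_C(d))$ and the value $h^0(\OO_C(d))=h^0(\OO_{\PP^2}(d))-h^0(\OO_{\PP^2}(d-2))=2d+1$. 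Thus every fiber of $\pr_1$ has codimension exactly $2d+1$, so $\Phi$ is irreducible of dimension
\[
\dim\Phi=\dim\mathcal{C}+\big(N-(2d+1)\big)=N+3n-2d-3.
\]

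It then remains to observe that $d\ge\frac{3n}{2}-1$ is equivalent to $2d\ge 3n-2$, i.e. $3n-2d-3\le -1$, so that $\dim\Phi<N=\dim\PP^N$. Hence $\overline{\pr_2(\Phi)}$ is a proper closed subvariety of $\PP^N$, and a general hypersurface $X$ avoids it, i.e. contains no reducible conic. The step I would be most careful about is the surjectivity of $H^0(\OO_{\PP^n}(d))\to H^0(\OO_C(d))$, since this is precisely what pins the fiber codimension at $2d+1$ and makes the bound on $d$ sharp; the rest is bookkeeping. If one prefers a projective parameter space, one can replace $\mathcal{C}$ by its closure in the Hilbert scheme of conics, whose non-reduced (double-line) boundary has dimension $3n-4<3n-2$ and so does not affect the count.
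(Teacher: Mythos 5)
Your proof is correct, and it is essentially the ``typical proof involving Hilbert functions'' that the paper explicitly says it is choosing \emph{not} to give. Your route: parametrize reducible conics ($\dim \mathcal{C} = 3n-2$), compute that the hypersurfaces containing a fixed reducible conic form a linear system of codimension exactly $h^0(\OO_C(d)) = 2d+1$ (via surjectivity of restriction, which you justify correctly through the plane spanned by $C$ and the vanishing of $H^1(\OO_{\PP^2}(d-2))$), and conclude $\dim\Phi = N + 3n - 2d - 3 < N$ exactly when $d \ge \frac{3n}{2}-1$. The arithmetic and the cohomological input all check out, and the remark about the double-line boundary is a harmless aside (one does not even need a projective parameter space, since the image of $\pr_2$ is constructible). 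The paper instead realizes a general pair $(p,X)$ with $p \in X \subset \PP^n$ as an $n$-plane section of a hypersurface $Y \subset \PP^{d+1}$ having only finitely many lines through each point (citing Harris--Roth--Starr), and then bounds the codimension, inside the Grassmannian of $n$-planes through $p$, of those planes whose section acquires a reducible conic noded at $p$; the count $2(d-n+1) \ge n$ reproduces the same threshold. What your approach buys is self-containedness and an exact fiber dimension; what the paper's approach buys is that it rehearses the ``slice a higher-dimensional hypersurface by linear spaces and count codimension in the Grassmannian'' technique that is the engine of the harder Theorem \ref{noCurvesFano} via Proposition \ref{GrassProp}. Both are valid proofs of the statement.
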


Using these three results, the Main Theorem follows easily.

\begin{proof}(Theorem \ref{mainCurvesThm}) Let $Y \subset X$ be the subvariety of $X$ swept out by lines. Let $U \to F_1(X)$ be the universal line on $X$.  Note that $U$ is smooth, since $X$ is very general.  Since there are no reducible conics in $X$, the natural map $U \to X$ is bijective on closed points.  We claim any map from $\PP^1 \to Y$ lifts to a map from $\PP^1 \to U$.  To see this, note that the result is clear unless the image of the $\PP^1$ lies in the singular locus of $Y$.  By repeatedly restricting the map $U \to Y$, we can obtain a map $\PP^1 \to U$.  But by Theorem \ref{ClemensRan}, any rational curve in $X$ has to lie in $Y$ and hence lift to $U$.  If it is not a line, it will not be contracted by the map $U \to F_1(X)$, and so we will get rational curves in $F_1(X)$.  However, this contradicts Theorem \ref{noCurvesFano}.
\end{proof}

\begin{remark}
In fact, as Ein pointed out to us, for $d \geq \frac{3n}{2}-1$ and $X \subset \PP^n$ general, the natural map $U \to X$ is an embedding.  To see this, note that a general hypersurface in this degree range will contain only lines $\ell$ with balanced normal bundle $N_{\ell/X} = \OO(-1)^{d-n+1} \oplus \OO^{2n-d-3}$ (since hypersurfaces in this degree range contain no planar double lines).  Thus, for a line of this form, the natural map on tangent spaces $T_{\ell} F_1(X) \to (N_{\ell/X})_p$ is injective, which means that the natural map on tangent spaces $T_{p,\ell} U \to T_p(X)$ is also injective, which means that the map $U \to X$ is an embedding.
\end{remark}

Note that the proof of Theorem \ref{mainCurvesThm} shows that the bounds in Corollary \ref{ClemensRan} are sharp, since our proof shows that there will be no rational curves contained in the subvariety of $X$ swept out by lines for $d \geq \frac{3n}{2} - 1$. However, we can see that for $d = \frac{3n}{2}-1$, $X$ will contain conics.  

To see this, note that the dimension of $R_e(X)$ is at least $e(n-d+1)+n-4$, and so for $e=2$, $d = \frac{3n}{2} - 1$, $R_2(X)$ has dimension at least $0$ if it is nonempty.  To see that it is nonempty note that the space of hypersurfaces containing a given smooth conic is irreducible of the expected dimension, and so the incidence-correspondence of smooth conics in a hypersurface is irreducible of the expected dimension.  $R_e(X)$ is then non-empty if we can exhibit a single smooth conic $C$ in a single hypersurface $X$ with $H^0(N_{C/X}) = 0$, which can be done by explicit computation. 

Thus it remains to prove Theorems \ref{noCurvesFano} and \ref{noConics}.  Theorem \ref{noConics} is substantially easier, and is probably well-known.  However, we offer a different proof from a typical proof involving Hilbert functions, one that will set up some of the ideas for the proof of Theorem \ref{noCurvesFano}.  The idea of this proof is to realize a very general $(p,X)$ with $p \in X$ as a hyperplane section of a higher-dimensional hypersurface, and show that as we vary the hyperplane section, the locus of such pairs containing a reducible conic will be of high codimension.

\begin{proof} (Theorem \ref{noConics})
%
Note that a general hypersurface in $\PP^n$ will be a hyperplane section of a hypersurface in $\PP^{d+1}$ (for $n < d$) with at most finitely many lines through any point $p \in Y$ by \cite{HRS}, Theorem 2.1.  Let $X$ in $\PP^n$ be a hyperplane section of such a hypersurface $Y$ in $\PP^{d+1}$ (for $n < d$).  Consider the subvariety of $\PP^n \times \PP^N$ \[ C_{n,d} = \overline{\{(p,X) | \: X \subset \PP^n, \: p \in X \text{ with a reducible conic in $X$ with node at $p$} \} }. \]

We aim to show that $C_{n,d}$ is codimension at least $n$ in $S = \{ (p,X) | \: p \in X \subset \PP^n \}$ at $(X,p)$, which will suffice to prove the result.  Let $Z$ be the set of $n$-planes in $\PP^{d+1}$ passing through $p$.  Let $Z' \subset Z$ be the closure of the subset of $n$-planes that contain a reducible conic in $Y$ with node at $p$. By upper semicontinuity of fiber dimension, it suffices to show that $Z'$ has codimension at least $n$ in $Z$ at (the $n$-plane containing) $X.$

The dimension of $Z$ will be $\dim \gG(n-1,d) = n(d-n+1)$.  Since $p$ has a $0$-dimensional family of lines through it, it follows that there will be a $0$-dimensional family of $2$-planes through $p$ that contain a reducible conic with node at $p$. Thus, the dimension of $Z'$ will be the dimension of the space of $n$-planes through $p$ containing a $2$-plane, or $\dim \gG(n-3,d-2) = 6+ (n-2)(d-2-(n-3)) = 6 + (n-2)(d-n+1) .$  This shows that the codimension of $Z'$ in $Z$ is
\[ n(d-n+1) - (n-2)(d-n+1) = 2(d-n+1) .\]
This will be at least $n$ precisely when
\[ 2(d-n+1) \geq n \]
or
\[ d \geq \frac{3n}{2}-1 .\]
\end{proof}

The proof of Theorem \ref{noCurvesFano} relies on the following result about Grassmannians, which is of independent interest.  Roughly speaking, it says that if you have a family of $m$-planes in $\PP^n$, then the family of $(m+1)$-planes containing at least one of them has smaller codimension in the Grassmannian.

\begin{proposition}
\label{GrassProp}
Let $m \leq n$.  Let $B \subset \gG(m,n)$ be irreducible of codimension at least $\epsilon \geq 1$.  Let $C \subset \gG(m-1,n)$ be a nonempty subvariety satisfying the following condition: $\forall c \in C$, if $b \in \gG(m,n)$ has $c \subset b$, then $b \in B$.  Then it follows that the codimension of $C$ in $\gG(m-1,n)$ is at least $\epsilon + 1$.
\end{proposition}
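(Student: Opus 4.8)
The plan is to set up the standard incidence correspondence between $C$ and the $m$-planes containing its members, and to reduce the statement to a bound on a single generic fiber dimension. After replacing $C$ by a top-dimensional irreducible component (the hypothesis is inherited by any subset, and the codimension is computed on a maximal component), I may assume $C$ is irreducible. I would then consider
\[ \Phi = \{(c,b) \in C \times \gG(m,n) : c \subset b\} = \pr_1^{-1}(C), \]
where $\pr_1,\pr_2$ denote the projections to $\gG(m-1,n)$ and $\gG(m,n)$. Since the flag variety is a $\PP^{n-m}$-bundle over $\gG(m-1,n)$ via $\pr_1$, the space $\Phi$ is irreducible with $\dim \Phi = \dim C + (n-m)$, and the hypothesis on $C$ says precisely that $W := \pr_2(\Phi) \subseteq B$.

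Let $f$ be the dimension of a generic fiber of $\pr_2|_\Phi \colon \Phi \to W$, that is, the generic dimension of $\{c \in C : c \subset b\}$ for $b \in W$. Then
\[ \dim C + (n-m) = \dim \Phi = \dim W + f \le \dim B + f \le (m+1)(n-m) - \epsilon + f. \]
Using $\dim \gG(m-1,n) = m(n-m+1)$, this rearranges to $\codim C \ge \epsilon + (m - f)$. Because the fiber $\{c \in C : c \subset b\}$ lives inside the $\PP^m$ of all $(m-1)$-planes contained in $b$, we always have $f \le m$, so it remains only to rule out $f = m$; this single improvement is exactly what produces $\epsilon + 1$ in place of $\epsilon$.

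The main obstacle is excluding $f = m$, and here the geometry of the Grassmannian must genuinely be used: for a general $\PP^{n-m}$-bundle and an arbitrary closed subset of codimension $\epsilon$, the locus of fibers contained in it can have codimension only $\epsilon$ (take a pullback from the base), so one must exploit that $B$ is closed and that the correspondence is symmetric in $c$ and $b$. Suppose $f = m$. Then for $b$ in a dense subset of $W$, every $(m-1)$-plane $c \subset b$ lies in $C$. For each such $c$ the hypothesis forces every $m$-plane $b' \supseteq c$ into $B$, and since $b' \supseteq c \in C$ we also get $b' \in W$. Writing $\Sigma(b) = \{b' : \dim(b \cap b') \ge m-1\} = \bigcup_{c \subset b} \{b' : b' \supseteq c\}$, we conclude that $\Sigma(b) \subseteq W \subseteq \overline{W}$ for all $b$ in a dense subset of $W$. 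The condition ``$\Sigma(b) \subseteq \overline{W}$'' is closed in $b$, so it holds for every $b \in \overline{W}$: the irreducible variety $\overline{W}$ contains, together with each of its points $b$, every $m$-plane meeting $b$ in an $(m-1)$-plane. Since any two $m$-planes in $\PP^n$ can be joined by a finite chain of $m$-planes in which consecutive members meet in an $(m-1)$-plane, induction along such a chain forces $\overline{W} = \gG(m,n)$, contradicting $\overline{W} \subseteq B$ and $\codim B \ge \epsilon \ge 1$. Hence $f \le m-1$, and $\codim C \ge \epsilon + 1$. I expect the closedness-and-connectivity step of this last paragraph to be the delicate point; the dimension bookkeeping in the first two paragraphs is routine.
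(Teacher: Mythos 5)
Your proof is correct and follows essentially the same route as the paper: the identical incidence correspondence with the two fiber-dimension counts, and the exclusion of the degenerate case $f=m$ via chain-connectivity of $m$-planes meeting in $(m-1)$-planes, which is exactly the content of the paper's Lemma~\ref{twoCondLemma}, here inlined (and with the closure/genericity issues handled somewhat more carefully than in the paper's own write-up).
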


To see that Proposition \ref{GrassProp} is sharp, note that the dimension of the space of lines through a given point has dimension $n-1$ and codimension $2(n-1) - (n-1) = n-1$, while the space of $2$-planes through a given point has dimension $2(n-2)$ and codimension $3(n-2) - 2(n-2) = n-2$.

Note that the statement of Proposition \ref{GrassProp} also holds for families of planes containing a fixed linear space, as can be seen by identifying the space of $m$-planes containing a fixed $k$-plane $\Lambda$ with the space of $(m-k-1)$-planes in a linear space complementary to $\Lambda$.

The proof makes use of the following elementary Lemma.

\begin{lemma}
\label{twoCondLemma}
Suppose $B \subset \gG(m,n)$ and $C \subset \gG(m-1,n)$ are nonempty subvarieties satisfying the following two conditions:
\begin{enumerate}
\item $\forall c \in C$, if $b \in \gG(m,n)$ has $c \subset b$, then $b \in B$.
\item $\forall b \in B$, if $c \in \gG(m-1,n)$ has $c \subset b$, then $c \in C$.
\end{enumerate}
Then $B = \gG(m,n)$ and $C = \gG(m-1,n)$.
\end{lemma}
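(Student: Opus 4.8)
The plan is to collapse the two hypotheses into a single closure condition on $C$ and then to prove a connectivity statement for the containment relation between $\gG(m-1,n)$ and $\gG(m,n)$. Call two $(m-1)$-planes $c,c'\in\gG(m-1,n)$ \emph{adjacent}, written $c\sim c'$, if some $m$-plane $b\in\gG(m,n)$ contains both of them. Identifying an $(m-1)$-plane $c$ with the corresponding $m$-dimensional linear subspace of $\CC^{n+1}$, adjacency holds exactly when $\dim(c\cap c')\geq m-1$: indeed $c$ and $c'$ lie in a common $(m+1)$-dimensional subspace iff $\dim(c+c')\leq m+1$, and such a subspace exists because $m\leq n$. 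The whole argument will be set-theoretic, so I will not need to use that $B$ and $C$ are closed; only that they are nonempty.

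First I would check that $C$ is closed under $\sim$. Suppose $c\in C$ and $c\sim c'$, witnessed by an $m$-plane $b$ with $c\subset b$ and $c'\subset b$. Since $c\in C$ and $c\subset b$, condition (1) gives $b\in B$; since then $b\in B$ and $c'\subset b$, condition (2) gives $c'\in C$. Thus $C$ is a union of equivalence classes of the transitive closure of $\sim$.

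The heart of the matter is to show that $\sim$ connects all of $\gG(m-1,n)$, i.e.\ any two $(m-1)$-planes are joined by a finite chain of adjacent ones; this is the step I expect to be the main obstacle, though it is elementary. I would argue by a basis-exchange induction on $m-\dim(c\cap c')$. Writing $c\cap c'=\langle u_1,\dots,u_d\rangle$, $c=\langle u_1,\dots,u_d,a_1,\dots,a_{m-d}\rangle$, and $c'=\langle u_1,\dots,u_d,b_1,\dots,b_{m-d}\rangle$, set $c_1=\langle u_1,\dots,u_d,b_1,a_2,\dots,a_{m-d}\rangle$. Since $b_1\in c'$ but $b_1\notin c\cap c'$ we have $b_1\notin c$, so $c_1$ is genuinely $m$-dimensional; moreover $c_1$ shares the $(m-1)$-dimensional space $\langle u_1,\dots,u_d,a_2,\dots,a_{m-d}\rangle$ with $c$, so $c\sim c_1$, while $\dim(c_1\cap c')\geq d+1$. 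By induction $c_1$ connects to $c'$, hence so does $c$; the base case $m-d=0$ is $c=c'$.

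Combining these steps finishes the proof. Since $C$ is nonempty and closed under the relation $\sim$, whose transitive closure is all of $\gG(m-1,n)$, it contains every $(m-1)$-plane, so $C=\gG(m-1,n)$. Finally every $m$-plane $b$ contains some $(m-1)$-plane $c$, which now lies in $C$, so condition (1) forces $b\in B$; hence $B=\gG(m,n)$. The only genuine work is the connectivity claim, and within it the mild bookkeeping is verifying that the intermediate plane $c_1$ has the correct dimension and the degenerate case $m=n$, where $\gG(m,n)$ is a single point and the statement is immediate.
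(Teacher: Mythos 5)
Your proof is correct and is essentially the same argument as the paper's: both connect an arbitrary plane to a given one by a finite chain in which consecutive terms are incident, and alternate conditions (1) and (2) to propagate membership along the chain. The only differences are cosmetic --- you run the chain through $(m-1)$-planes starting from a point of $C$ while the paper chains through $m$-planes starting from a point of $B$, and you spell out the basis-exchange construction of the chain, which the paper merely asserts.
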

\begin{proof} (Lemma \ref{twoCondLemma})
Let $\Lambda \in B$, and let $\Phi \in \gG(m,n)$.  We will show $\Phi \in B$.  Let $k = m - \dim (\Lambda \cap \Phi)$.  Then there exists a sequence of $m$ planes $\Lambda = \Lambda_0, \Lambda_1, \cdots, \Lambda_k = \Phi$ such that $\dim(\Lambda_i \cap \Lambda_{i+1}) = m-1$.  By Condition 2, if $\Lambda_i \in B$, then $\Lambda_i \cap \Lambda_{i+1} \in C$ and, hence, by Condition 1, $\Lambda_{i+1} \in B$.  Since $\Lambda_0 = \Lambda$ was in $B$ by assumption, we see that $\Phi \in B$.  This shows $B = \gG(m,n)$, and it follows that $C = \gG(m-1,n)$.
\end{proof}

\begin{proof} (Proposition \ref{GrassProp})
We show that $\dim C \leq  m(n-m+1) - (\epsilon + 1)$.  Consider the incidence correspondence $\II = \{(b,c) | \: c \in C, b \in \gG(m,n), c \subset b \}$.  Note that if $(b,c) \in \II$, then necessarily $b \in B$.  Let $\pi_B: \II \to B$, $\pi_C: \II \to C$ be the projection maps.  The generic fiber of $\pi_B$ has dimension at most $m-1$, since if all the fibers had dimension $m$, both Conditions 1 and 2 of Lemma \ref{twoCondLemma} would be satisfied, contradicting $\epsilon \geq 1$.  Thus, $\dim \II \leq \dim B + m-1 \leq (m+1)(n-m) + m-1 - \epsilon$.  By the Condition on $C$, we see that the fibers of $\pi_C$ have dimension $n-m$, so $\dim \II = \dim C + n-m$.  Putting the two relations together gives
\[ \dim C + n-m = \dim \II \leq (m+1)(n-m) + m-1 - \epsilon \]
or
\[ \dim C \leq (m+1)(n-m) - (n-m) - \epsilon = m(n-m) + m-1 - \epsilon = m(n-m+1) - (\epsilon + 1) .\]
The result follows.
\end{proof}

Using Proposition \ref{GrassProp}, we can prove Theorem \ref{noCurvesFano}.

\begin{proof} (Theorem \ref{noCurvesFano})
First we find pairs $(n,d)$ for which $F_1(X)$ is general type.  Let $\UU$ be the incidence correspondence $\{ (\ell, X) | \ell \subset X \} \subset \gG(1,n) \times \PP^N$.  Note that $\UU$ is smooth because it is a projective bundle over $\gG(1,n)$.  Thus by Sard's theorem, for $X$ general $F_1(X)$ is the smooth expected-dimensional vanishing locus of a section of the vector bundle $\Sym^d(S^*)$.  Recall that $c(S^*) = 1 + \sigma_1+\sigma_{1,1}$.  We use the splitting principle to work out $c_1(\Sym^d(S^*))$.  Suppose $c(S^*) = (1+\alpha)(1+\beta)$.  Then $c(\Sym^d(S^*)) = \sum_{k=0}^d (1+\alpha)^k(1+\beta)^{d-k}$.  Counting the coefficients of $\alpha$ in the products, we see that
\[ c_1(\Sym^d(S^*)) = \sum_{k=0}^d k \sigma_1 = \frac{d(d+1)}{2} \sigma_1 .\]
The canonical bundle of $\gG(1,n)$ is $-(n+1)\sigma_1$.  Thus, the canonical bundle of $F_1(X)$ is $(-n-1+\frac{d(d+1)}{2})\sigma_1$.  This will be ample if
\[ -n-1+\frac{d(d+1)}{2} \geq 1 \]
or
\[ n \leq \frac{d(d+1)}{2}-2 .\]
Let $m = \frac{d(d+1)}{2}-2$.  Consider 
\[ R_{n,d} = \{ (\ell, X) | \ell \subset X \subset \PP^n, \exists \text{ rational curve in $F_1(X)$ containing $[\ell]$} \}  \subset \gG(1,n) \times \PP^N .\]
Note that $R_{n,d}$ is a possibly countable union of irreducible varieties.  We claim that $R_{n,d}$ is codimension at least $2n-d-2$ in $\UU$ for $d \geq \frac{3n+1}{2}$, which will prove that a very general hypersurface $X \subset \PP^n$ has no rational curves in its Fano scheme.

Let $(\ell_0,X_0) \in R_{n,d}$.  We find a family $\FF \subset \UU$ with $(\ell_0,X_0) \in \FF \subset \{(\ell,X) | \ell \subset X \}$ such that $\FF \cap R_{n,d} \subset \FF$ is codimension at least $2n-d-2$ in $\FF$.  Let $Y' \subset \PP^m$ be a general hypersurface of degree $d$, and $\ell' \subset Y'$ a very general line in $Y$.  Note that there are no rational curves in $F_1(Y')$ through $[\ell']$ since $F_1(Y')$ is smooth and general type.  Let $Y \subset \PP^M$ (for some large $M$) be a hypersurface containing a line $\ell'_0$ such that $(\ell_0,X)$ is a $n$-plane section of $(\ell'_0, Y)$ and $(\ell',Y')$ is an $m$-plane section of $(\ell'_0,Y)$.  

Let $Z_r \subset \Hom(\PP^r, \PP^M)$ be the set of parameterized $r$-planes in $\PP^M$ containing $\ell'_0$, and let $Z'_r \subset Z_r$ be the set of parameterized $r$-planes containing $\ell$ such that the corresponding linear section of $Y$ has no rational curve through $\ell'_0$ in its Fano scheme.  We let $\FF$ be the image of $Z_n$ in the space of pairs $(\ell,X)$.  Then $\FF \cap R_{n,d}$ will be the image of $Z'_n$ in the space of hypersurfaces of degree $d$ in $\PP^n$.  It suffices to show that $Z'_n$ is codimension at least $2n-d-2$ in $Z_n$.  We see that $Z'_m$ is codimension at least one in $Z_m$.  By Proposition \ref{GrassProp}, we see that $Z'_n$ is codimension at least $m-n+1$ in $Z_n$.  Thus, our result will hold for 
\[ m-n+1 \geq 2n-d-2 \]
or
\[ 3n \leq m + d + 3 \]
or equivalently
\[ n \leq \frac{m+d}{3}+1 = \frac{(d+1)(d+2)}{6} .\]
\end{proof}

\begin{remark}
We should mention that as Ein pointed out to us, Theorem \ref{noCurvesFano} can also be proven using techniques similar to those in \cite{Ein1988} and \cite{Pacienza2d-3}.  We omit the details of a complete proof along these lines, but in broad outlines, the technique is to show that for $\aA$ the universal line on a hypersurface, the bundle $T_{\aA}|_{F_1(X)}(1)$ is globally generated, for $X$ very general.  This means that 
\[ \wedge^{2n-d-4} T_{\aA}|_{F_1(X)} (2n-d-4) \cong \Omega_{\aA}^{N+1}|_{F_1(X)} \left(-\frac{d(d+1)}{2}+n+1 + 2n-d-4 \right) \]
is globally generated.  Thus, $\Omega_{\aA}^{N+1}|_{F_1(X)}$ will be globally generated for 
\[ -\frac{d(d+1)}{2}+n+1 + 2n-d-4 \leq 0 \]
or
\[ 3n \leq \frac{d^2+d+2d+6}{2} = \frac{d^2+3d+6}{2} \]
or
\[ n \leq \frac{d^2+3d+6}{6} \]
which is approximately the bound we get by our technique.

As far as we are aware, this technique, while yielding a similar bound, is different in nature from our technique involving Grassmannians, even though we can recover Ein's results using the Grassmannian technique, and this global generation technique can recover these results about the Grassmannian.
\end{remark}

The techniques here can be used to prove that a general hypersurface in sufficiently high degree contains no copies of a class of subvarieties, provided that there exists a degree such that a general hypersurface is not swept out by these subvarieties.  When combined with a result of Voisin \cite{voisin2004}, this can be a useful way to prove that given any class of varieties and fixed dimension of hypersurface, there exists a degree large enough so that a very general hypersurface contains no varieties in that class.

\begin{theorem}
\label{generalTheorem}
Suppose that a very general complete intersection of degree $(d_1, \cdots, d_c)$ in $\PP^{2m-c-k}$ is not rationally swept out by varieties in the family $Y \to B$ with $Y_b$ $k$-dimensional.  Then a very general complete intersection of degree $(d_1, \cdots, d_c)$ in $\PP^m$ contains no varieties from the family $Y \to B$.
\end{theorem}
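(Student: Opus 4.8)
The plan is to run the same Grassmannian descent used in the proof of Theorem \ref{noCurvesFano}, now tracking containment of a member of the family $Y \to B$ in place of a rational curve in a Fano scheme. Set $N = 2m - c - k$. Fix a very general complete intersection $\mathcal{Y}$ of degree $(d_1,\dots,d_c)$ in a large $\PP^L$ with $L \gg 0$, chosen so that a general $r$-plane section $\mathcal{Y} \cap \Pi$ is a very general complete intersection of the same degree in $\Pi \cong \PP^r$ for every $r$ in the relevant range; in particular a general $m$-plane section is a very general complete intersection in $\PP^m$, and a general $N$-plane section is one in $\PP^N$. Mark a general point $p \in \mathcal{Y}$, and for $m \le r \le N$ let $Z_r$ be the space of $r$-planes through $p$ in $\PP^L$ and $Z'_r \subset Z_r$ the locus of those $\Pi$ for which $\mathcal{Y} \cap \Pi$ contains some member $Y_b$ of the family passing through $p$.

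First I would establish the base case, that $Z'_N$ has codimension at least $1$ in $Z_N$. This is exactly the translation of the hypothesis: a general $N$-plane through $p$ cuts out a very general complete intersection in $\PP^N$ with $p$ a general point of it, and since this complete intersection is not rationally swept out by the $Y_b$, a general such point lies on no member of the family contained in the section. Next observe that the defining condition propagates upward: if $\Pi' \subset \Pi$ and $\mathcal{Y} \cap \Pi'$ contains some $Y_b \ni p$, then $Y_b \subset \mathcal{Y} \cap \Pi' \subset \mathcal{Y} \cap \Pi$, so $\Pi \in Z'$ as well. After identifying $r$-planes through $p$ with $(r-1)$-planes in a complementary space, this is precisely the hypothesis of Proposition \ref{GrassProp} with the bad locus $Z'$ playing the role of $B$. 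Applying the proposition once for each step from $r = N$ down to $r = m$ raises the codimension by one at each stage, yielding
\[ \codim(Z'_m, Z_m) \ge (N - m) + 1 . \]

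Finally I would convert this into properness of the locus $\RR$ of $m$-planes in $\PP^L$ whose section contains some $Y_b$ (a priori a countable union of subvarieties, which is harmless since the conclusion is for \emph{very general} complete intersections). Consider the total space $T = \{(p,\Pi) : p \in \mathcal{Y},\ p \in \Pi,\ \dim \Pi = m\}$, of dimension $(L-c) + m(L-m)$, and its sublocus $T'$ cut out fiberwise over $p$ by the condition defining $Z'_m$, so that $\dim T' \le (L-c) + m(L-m) - (N - m + 1)$. Projection $T' \to \RR$ by $(p,\Pi) \mapsto \Pi$ has fibers of dimension at least $k$, since a bad $\Pi$ carries an entire $Y_b$ worth of admissible marked points, whence $\dim \RR \le \dim T' - k$. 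Comparing with $\dim\{m\text{-planes in }\PP^L\} = (m+1)(L-m)$, one checks that $\dim \RR$ is strictly smaller precisely when $N \ge 2m - c - k$, which holds for our $N$. Hence $\RR$ is a proper subvariety, and a general $m$-plane section — that is, a very general complete intersection in $\PP^m$ — contains no $Y_b$.

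The main obstacle is the base case rather than the descent: one must make precise the passage from ``not rationally swept out'' to the statement that a general point of a general $N$-plane section lies on no member of the family contained in that section, while simultaneously guaranteeing that marking a general $p$ first and then choosing a general $N$-plane through it still produces a very general complete intersection with a general marked point. This forces $L$ to be taken sufficiently large and requires some care in how ``copies of the $Y_b$'' and their embeddings into $\PP^L$ are parametrized. Once this is arranged, the Grassmannian bookkeeping and the final dimension count are routine, and $N = 2m - c - k$ emerges as the smallest value for which the count closes, the extra $-k$ being exactly the cost of sweeping the marked point across a whole $Y_b$.
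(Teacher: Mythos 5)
Your proposal is correct and follows essentially the same route as the paper: realize the $m$-plane sections as linear sections through a marked point of a large ambient complete intersection, get the codimension-one base case in dimension $2m-c-k$ from the ``not rationally swept out'' hypothesis, descend to $\PP^m$ via iterated application of Proposition \ref{GrassProp} to gain codimension $m+1-c-k$, and finish with the fiber-dimension count over the marked point. Your write-up is in fact somewhat more explicit than the paper's (which leaves the final conversion from codimension in the incidence correspondence to the statement about very general complete intersections implicit), but the argument is the same.
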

\begin{proof}
Consider the components of the incidence correspondence 
\[ \Gamma_n = \{ (p,X) | \: p \in X \subset \PP^n, p \text{ lies on a subvariety swept out by varieties of } Y \}. \]
We see that this incidence correspondence will have countably many irreducible components.  We show that $\Gamma_m$ will have codimension at least $m+1-c-k$ in the incidence correspondence
\[ \II_m = \{ (p,X) | \: p \in X \subset \PP^m \} .\]

We use the same technique as in the proof of Theorem \ref{noCurvesFano}.  Let $(p,X_0) \in \Gamma_m$ be general, let $(p,X_1) \in \Gamma_{2m-k}$ be general and let $(p,X_2) \in \Gamma_{3m-k}$ a pair such that $X_0$ and $X_1$ are $m$-plane sections of $X_2$ by a plane through $p$.  Let $S \subset \II_m$ be the space of pairs $(p,X)$ that are $m$-plane sections of $X_2$ by a plane through $p$.  Then by Proposition \ref{GrassProp} we see that $S \cap \Gamma_m \subset S$ is codimension at least $2m-c-k-m + 1 = m+1-c-k$.  This concludes the proof.
\end{proof}

This allows us to recover the results of Ein \cite{Ein1988}.

\begin{corollary}
A very general complete intersection of degree $(d_1, \cdots, d_c)$ in $\PP^n$ will contain no varieties that do not admit a resolution with effective canonical bundle if $\sum_{i=1}^c d_i \geq 2n-k-c+1$.  If the inequality is strict, then every subvariety will be general type.
\end{corollary}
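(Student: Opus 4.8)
The plan is to reduce both assertions to Theorem \ref{generalTheorem} with $m = n$, supplying the required ``not rationally swept out'' hypothesis in the auxiliary space $\PP^{2n-c-k}$ by means of Voisin's theorem \cite{voisin2004}.

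For the first assertion I would let $Y \to B$ be the family parametrizing $k$-dimensional varieties whose desingularizations carry no effective canonical bundle. Strictly speaking this is not a single bounded family but a countable union of bounded families (indexed by the discrete invariants of the members), so I would run the argument on each bounded piece and intersect the resulting very general loci; since a countable intersection of very general loci is again very general, and since the proof of Theorem \ref{generalTheorem} already accounts for countably many components of the relevant incidence correspondence, this causes no difficulty. By Theorem \ref{generalTheorem} it then suffices to show that a very general complete intersection $X'$ of degree $(d_1, \dots, d_c)$ in $\PP^{2n-c-k}$ is not rationally swept out by members of this family.

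The key numerical point is that $X'$ has canonical bundle $\OO\!\left(\sum_{i=1}^c d_i - (2n-c-k) - 1\right)$, and the hypothesis $\sum_{i=1}^c d_i \geq 2n-k-c+1$ is exactly the statement that the twisting integer is nonnegative, i.e.\ that $K_{X'}$ is effective. Voisin's result \cite{voisin2004} then applies: a very general complete intersection with effective canonical bundle is not rationally swept out by a family whose general member fails to have effective canonical bundle on a desingularization. This is precisely the hypothesis needed, so Theorem \ref{generalTheorem} yields that a very general complete intersection in $\PP^n$ contains no such $k$-fold, establishing the first assertion. For the second assertion I would instead take $Y \to B$ to be the family of $k$-folds that are not of general type; when the inequality is strict the twisting integer is at least $1$, so $K_{X'}$ is ample, and the general-type case of Voisin's result shows $X'$ is not rationally swept out by non-general-type subvarieties. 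Transporting this back through Theorem \ref{generalTheorem} shows every $k$-dimensional subvariety of the very general complete intersection in $\PP^n$ is of general type.

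The main obstacle is ensuring that the precise statement of Voisin's theorem matches the sweeping-out formulation required here, in particular that ``effective (resp.\ big) canonical bundle on $X'$'' forces the general member of any covering family to have effective (resp.\ big) canonical bundle after resolution; this is the substantive geometric input, whereas the Grassmannian reduction of Theorem \ref{generalTheorem} is purely formal. One should also keep track of the dimension constraint $k \leq n - c$ (so that $k$-folds can genuinely sweep out the $(2n-2c-k)$-dimensional $X'$), which is the analogue of Ein's range $1 \leq k \leq n-3$. As a consistency check, specializing to $c = 1$ recovers Ein's bounds exactly: $d \geq 2n-k$ forces effective canonical bundle and $d > 2n-k$ forces general type.
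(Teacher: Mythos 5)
Your proposal is essentially the paper's own argument: apply Theorem \ref{generalTheorem} with $m=n$, observe that the complete intersection in $\PP^{2n-c-k}$ has effective (resp.\ ample) canonical bundle exactly when $\sum_{i=1}^c d_i \geq 2n-k-c+1$ (resp.\ when the inequality is strict), and invoke the fact that such a variety cannot be rationally swept out by $k$-folds whose resolutions lack effective (resp.\ big) canonical bundle. The one discrepancy is attribution: the covering-family fact you flag as the substantive input is not Voisin's theorem from \cite{voisin2004} (which concerns sufficiently large degree for an arbitrary fixed family), but the standard ramification-divisor/adjunction argument, which the paper likewise asserts without proof.
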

\begin{proof}
Note that if a family of varieties sweeps out a Calabi-Yau complete intersection, then a general element of the family must have effective canonical bundle.  Taking $\sum_{i=1}^c d_i-1 = 2n-c-k$ from the corollary, we get our result if $\sum_{i=1}^c d_i \geq 2n-k-c+1$.

Using the fact that a family sweeping out a general type complete intersection must be general type and noting that complete intersections of degree $(d_1, \cdots, d_c)$ with $d = \sum d_i$ in $\PP^{d-2}$ are general type we obtain the second part of the statement.
\end{proof}

Using a theorem of Voisin \cite{voisin2004}, we obtain another immediate corollary.

\begin{theorem}
(Voisin \cite{voisin2004}) Let $n$ and $S \to B$ be given.  Then there exists $d$ sufficiently large so that a very general degree $d$ hypersurface in $\PP^n$ is not rationally swept by varieties in the family $S \to B$.
\end{theorem}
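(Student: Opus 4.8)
The plan is to argue by contradiction, converting a sweeping family into a lower bound on the canonical volume of its general member that grows without bound in $d$, and then to play this against the fixed family $S \to B$, whose members have uniformly bounded canonical volume. Suppose that for a non-negligible set of degree-$d$ hypersurfaces $X \subset \PP^n$ the conclusion fails, so that $X$ is rationally swept by $S \to B$. Unwinding the definition, I would produce an irreducible parameter variety $T$, a morphism $T \to B$, and a dominant rational map $\Phi \colon \mathcal{Y} \to X$ from a resolution of the total space of the corresponding family of $k$-dimensional subvarieties $Y_t \subset X$, the general $Y_t$ being birational to a member of $S \to B$. Let $\tilde{Y}$ be a desingularization of the general $Y_t$. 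Since $S \to B$ is a fixed family of finite type, resolving it simultaneously and using deformation-invariance (and boundedness in families) of the canonical volume, I obtain a constant $V = V(S \to B)$, independent of $d$, with $\operatorname{vol}(\omega_{\tilde{Y}}) \leq V$.

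Next I would produce positivity of $\omega_{\tilde{Y}}$ that grows with $d$. Here I use the sharp form of the Clemens–Ein–Voisin positivity underlying the Clemens–Ran theorem quoted above — equivalently, the global-generation statement for twisted differentials on the universal hypersurface described in the Remark following Theorem \ref{noCurvesFano}. Pushed to its sharp, un-floored form, this gives, for $X$ very general and $Y \subset X$ a $k$-fold not contained in the locus swept out by lines, a nonzero section
\[ h^0\!\left(\tilde{Y}, \omega_{\tilde{Y}}(t)\right) \neq 0, \qquad t = -d + n + 1 + \left\lfloor \tfrac{n-k}{2} \right\rfloor, \]
so that $t \to -\infty$ as $d \to \infty$ with $n$ and $k$ fixed. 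For $d$ large the locus swept out by lines is at most one-dimensional (for $d \geq 2n-3$ a very general $X$ carries only finitely many lines, by the argument of Theorem \ref{noCurvesFano}), hence cannot cover the $(n-1)$-dimensional $X$; so the general covering fiber $Y$ avoids it and the estimate applies. I would also need to check that $\Phi^*$ does not kill the relevant form, i.e. that the general fiber is not everywhere tangentially degenerate, so that the section survives on $\tilde{Y}$.

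The contradiction is then a volume count. Writing $H_Y$ for the pullback of the hyperplane class to $\tilde{Y}$ — nef, with $H_Y^{\,k} = \deg Y \geq 1$ — a nonzero section of $\omega_{\tilde{Y}}(t)$ with $t < 0$ exhibits $\omega_{\tilde{Y}} - |t|\,H_Y$ as an effective class. Monotonicity of the volume under adding an effective class, together with $\operatorname{vol}(|t|\,H_Y) = |t|^{k} H_Y^{\,k}$ for the nef class $H_Y$, gives
\[ V \;\geq\; \operatorname{vol}(\omega_{\tilde{Y}}) \;\geq\; \operatorname{vol}(|t|\,H_Y) \;=\; |t|^{k}\,H_Y^{\,k} \;\geq\; |t|^{k}. \]
Choosing $d_0$ so large that $|t|^{k} > V$ for $d \geq d_0$ — which depends only on $n$, on $k = \dim Y_t$, and on the constant $V$ attached to $S \to B$ — yields a contradiction. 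Crucially the degree $e = \deg Y$ of the covering subvarieties, a priori unbounded as $X$ varies, enters only through the factor $H_Y^{\,k} \geq 1$ and so only strengthens the inequality; this is what makes the bound uniform in $e$ and lets a single $d_0$ work for very general $X$. Running this over each irreducible component of the relevant parameter spaces and taking the countable union of the resulting proper loci, a very general $X$ of degree $d \geq d_0$ is swept by no such family.

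The main obstacle is the input of growing positivity. The Clemens–Ran statement as quoted floors the twist at $0$, so for large $d$ it yields only $h^0(\omega_{\tilde{Y}}) \neq 0$ — effective canonical bundle — which forces merely $\operatorname{vol}(\omega_{\tilde{Y}}) \geq 0$ and gives no contradiction. The real work, carried out in the cited paper, is to establish the sharp negative twist $t \sim -d$, uniformly over the covering family and with the pulled-back section shown to be nonzero on the general fiber; this is exactly the part that cannot be extracted from the saturated form of the Clemens–Ran theorem, and is why the result is invoked rather than re-derived here. The remaining ingredients — boundedness of the canonical volume over the fixed family and the elementary volume monotonicity — are standard.
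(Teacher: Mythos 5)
First, a point of comparison: the paper does not prove this statement at all --- it is quoted as a theorem of Voisin from \cite{voisin2004} and used as a black box to deduce Corollary \ref{voisinCor}. So there is no in-paper proof to measure your argument against; Voisin's own proof goes through Nori's connectivity theorem, a Hodge-theoretic mechanism quite different from the twisted-canonical-positivity route you take.

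Taken on its own terms, your proposal has a genuine gap exactly where you flag it. The entire engine of the argument is the un-floored twisted genus bound $h^0(\omega_{\tilde{Y}}(t)) \neq 0$ with $t = -d+n+1+\lfloor \frac{n-k}{2} \rfloor \to -\infty$; but the Clemens--Ran theorem as quoted sets $t = \max(0,\cdot)$, so for $d$ large it yields only $h^0(\omega_{\tilde{Y}}) \neq 0$, hence only $\operatorname{vol}(\omega_{\tilde{Y}}) \geq 0$, and no contradiction with the uniform bound $V$. You acknowledge this and ``invoke'' the sharp negative twist, but that is precisely the nontrivial quantitative content; a proof that defers its only growing input to an unestablished citation is a reduction, not a proof. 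Granting that input, the rest of the scaffolding is essentially sound: boundedness of $\operatorname{vol}(\omega)$ over a finite-type family (via the ramification inequality if the sweeping members are only dominated by, rather than birational to, fibers of $S \to B$), monotonicity of volume under adding an effective class, $\operatorname{vol}(|t|H_Y) = |t|^k \deg Y \geq |t|^k$, and --- the genuinely good observation --- uniformity in the a priori unbounded degree of the covering subvarieties. Two smaller corrections: the Clemens--Ran statement applies directly to a desingularization of the subvariety $Y_0 \subset X$, so the worry about whether $\Phi^*$ kills the section is not where the difficulty lies; and finiteness of lines for $d \geq 2n-3$ is a dimension count on $F_1(X)$, not a consequence of Theorem \ref{noCurvesFano}, which concerns rational curves in the Fano scheme.
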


\begin{corollary}
\label{voisinCor}
Let $n$ and $S \to B$ be given.  Then there exists a $d$ sufficiently large so that a very general degree $d$ hypersurface in $\PP^n$ admits no finite maps from varieties in the fibers of $S \to B$.
\end{corollary}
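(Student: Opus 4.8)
The plan is to combine the Voisin theorem (which guarantees a degree above which a very general hypersurface is not \emph{rationally swept} by the family $S \to B$) with the previously established Theorem~\ref{generalTheorem}, which converts a ``not swept out'' statement in high-dimensional projective space into a ``contains no copies'' statement in lower-dimensional projective space. The final subtlety is that the corollary claims there are no \emph{finite maps} from fibers of $S \to B$, which is a priori weaker than containing no isomorphic copies, so I must be careful that the induction handles the image subvarieties rather than the source varieties themselves.

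Let me sketch the key steps. First I would observe that if $X \subset \PP^n$ admits a finite map $g: S_b \to X$ from some fiber $S_b$ of the family, then the image $g(S_b)$ is a subvariety of $X$ of dimension $k = \dim S_b$. So it suffices to show that for $d$ large, a very general degree $d$ hypersurface contains no $k$-dimensional subvariety arising as such an image. Second, I would replace the family $S \to B$ by the family of images $\{g(S_b)\}$; more precisely, I would work with the family $S' \to B'$ parametrizing the images of all finite maps from fibers of $S \to B$. The crucial point is that a variety being rationally swept out by images of $S$ is controlled by whether it is rationally swept out by $S$ itself, since the images are themselves dominated by (hence no harder to produce than) the $S_b$. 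Third, I would apply the Voisin theorem to this family $S' \to B'$ in the ambient space $\PP^{2n-k}$ (the relevant higher-dimensional projective space appearing in Theorem~\ref{generalTheorem} for the hypersurface case, $c = 1$), obtaining a degree $d_0$ above which a very general hypersurface in $\PP^{2n-k}$ is not rationally swept by $S'$.

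Then the heart is an application of Theorem~\ref{generalTheorem}: with $c = 1$, taking the ambient dimension $2m - c - k = 2m - 1 - k$, the theorem says that if a very general hypersurface in $\PP^{2m-1-k}$ is not rationally swept by the family, then a very general hypersurface in $\PP^m$ contains no varieties from the family. Setting $m = n$ and matching the ambient dimensions, I conclude that for $d \geq d_0$, a very general degree $d$ hypersurface in $\PP^n$ contains no subvarieties from the image family $S'$, hence admits no finite map from any fiber $S_b$.

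The main obstacle, and the step requiring the most care, is the reduction from finite maps to subvarieties and the passage from the family $S$ to its family of images $S'$. I must verify that ``not rationally swept out by $S$'' genuinely implies ``not rationally swept out by the images of $S$'' in the relevant ambient dimension, since rational sweeping is defined in terms of dominant-type incidence correspondences; because every image $g(S_b)$ is a quotient of $S_b$ by a finite map, a family of images sweeping out a variety would pull back to $S$ sweeping out that variety, so the implication should hold, but making this rigorous requires controlling the dimensions of the parameter spaces so that the codimension bookkeeping in Theorem~\ref{generalTheorem} still goes through. Once this equivalence is pinned down, the rest is a direct citation of the Voisin theorem followed by Theorem~\ref{generalTheorem}.
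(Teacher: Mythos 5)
Your proposal matches the paper's (implicit) argument: the corollary is obtained by applying Voisin's theorem in the ambient dimension $2n-1-k$ demanded by Theorem~\ref{generalTheorem} (with $c=1$, $m=n$) and then invoking that theorem. Your additional care in reducing finite maps to their image subvarieties and observing that non-sweeping by $S$ implies non-sweeping by the family of images fills in a detail the paper leaves unstated; just note that the relevant ambient space is $\PP^{2n-1-k}$ rather than $\PP^{2n-k}$ as you first wrote (you correct this later in the proposal).
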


\bibliographystyle{plain}
\bibliography{breakingandborrowing}
\end{document}